\newtheorem{theorem}{Theorem}[section]
\newtheorem{proposition}{Proposition}[section]
\newtheorem{lemma}{Lemma}[section]
\newtheorem{Remark}{Remark}[section]
\newcommand{\be}{\begin{equation}}
\newcommand{\ee}{\end{equation}}
\newcommand{\bea}{\begin{eqnarray}}
\newcommand{\eea}{\end{eqnarray}}
\newcommand{\eeas}{\end{eqnarray*}}
\newcommand{\beas}{\begin{eqnarray*}}
\numberwithin{equation}{section}
\begin{document}

\begin{center}
{\large \textbf{\ On Interpolating Sesqui-Harmonic Legendre Curves in
Sasakian Space Forms}}

\bigskip \bigskip

\textbf{Fatma KARACA}, \textbf{Cihan \"{O}ZG\"{U}R and Uday Chand DE}
\bigskip
\end{center}

\textbf{Abstract.} We consider interpolating sesqui-harmonic Legendre curves
in Sasakian space forms. We find the necessary and sufficient conditions for
Legendre curves in Sasakian space forms to be interpolating sesqui-harmonic.
Finally, we obtain an example for an interpolating sesqui-harmonic Legendre
curve in a Sasakian space form.

\textbf{Mathematics Subject Classification. }53C25, 53C40, 53A05.

\textbf{Keywords and phrases. }Interpolating sesqui-harmonic curve, Legendre
curve, Sasakian space form.

\medskip

\section{\textbf{Introduction }\label{sect-introduction}}

A map $\varphi $ $:\left( M,g\right) \rightarrow \left( N,h\right) $ between
Riemannian manifolds is called a \textit{harmonic map }and \ a \textit{%
biharmonic map, }respectively if it is a critical point of the $E(\varphi )$%
\textit{\ }and $E_{2}(\varphi )$
\begin{equation*}
E(\varphi )=\int_{\Omega }\left\Vert d\varphi \right\Vert ^{2}d\nu _{g},
\end{equation*}%
\begin{equation*}
E_{2}(\varphi )=\int_{\Omega }\left\Vert \tau (\varphi )\right\Vert ^{2}d\nu
_{g},
\end{equation*}%
where $\Omega $ is a compact domain of $M$. The harmonic map equation is%
\begin{equation}
\tau (\varphi )=tr(\nabla d\varphi )=0,  \label{harmonic}
\end{equation}%
and it is called the \textit{tension field }of $\varphi $ \cite{ES}$.$ The
Euler-Lagrange equation of $E_{2}(\varphi )$ is
\begin{equation}
\tau _{2}(\varphi )=tr(\nabla ^{\varphi }\nabla ^{\varphi }-\nabla _{\nabla
}^{\varphi })\tau (\varphi )-tr(R^{N}(d\varphi ,\tau (\varphi ))d\varphi )=0,
\label{bitensionfield}
\end{equation}%
and it is called the \textit{bitension field} of $\varphi $ \cite{Ji1}.

In \cite{Branding}, Branding defined and considered interpolating
sesqui-harmonic maps between Riemannian manifolds. The author introduced an
action functional for maps between Riemannian manifolds that interpolated
between the actions for harmonic and biharmonic maps. The map $\varphi $ is
said to be \textit{interpolating sesqui-harmonic }if it is a critical point
of $E_{\delta _{1},\delta _{2}}(\varphi )$ \textit{\ }
\begin{equation}
E_{\delta _{1},\delta _{2}}(\varphi )=\delta _{1}\int_{\Omega }\left\Vert
d\varphi \right\Vert ^{2}d\nu _{g}+\delta _{2}\int_{\Omega }\left\Vert \tau
(\varphi )\right\Vert ^{2}d\nu _{g},  \label{criticalpointsemibiharmonic}
\end{equation}%
where $\Omega $ is a compact domain of $M$ and $\delta _{1},\delta _{2}\in
\mathbb{R}
$ \cite{Branding}. The interpolating sesqui-harmonic map equation is%
\begin{equation}
\tau _{\delta _{1},\delta _{2}}(\varphi )=\delta _{2}\tau _{2}(\varphi
)-\delta _{1}\tau (\varphi )=0  \label{semibiharmonic}
\end{equation}%
for $\delta _{1},\delta _{2}\in
\mathbb{R}
$ \cite{Branding}. An interpolating sesqui-harmonic map is biminimal if
variations of (\ref{criticalpointsemibiharmonic}) that are normal to the
image $\varphi (M)\subset N$ and $\delta _{2}=1$, $\delta _{1}>0$ \cite{LM}.
For some recent study of biminimal immersions see \cite{GO-17}, \cite{LM},
\cite{Luo} and \cite{Maeta}.

Interpolating sesqui-harmonic curves in a $3$-dimensional sphere were
studied in \cite{Branding}. In \cite{Fetcu} and \cite{FO}, Fetcu and Oniciuc
considered biharmonic Legendre curves in Sasakian space forms. In \cite%
{Cho-09}, Cho, Inoguchi and Lee studied affine biharmonic curves in $3$%
-dimensional pseudo-Hermitian geometry. In \cite{In-13}, Inoguchi and Lee
studied affine biharmonic curves in $3$-dimensional homogeneous geometries.
\ In \cite{OG}, the second author and G\"{u}ven\c{c} studied biharmonic
Legendre curves in generalized Sasakian space forms. In \cite{GO}, G\"{u}ven%
\c{c} and the second author studied $f$-biharmonic Legendre curves in
Sasakian space forms. Motivated by the above studies, in the present paper,
we consider interpolating sesqui-harmonic Legendre curves in Sasakian space
forms. We obtain the necessary and sufficient conditions for Legendre curves
in Sasakian space forms to be interpolating sesqui-harmonic. We also give an
example for an interpolating sesqui-harmonic Legendre curve in a Sasakian
space form.

\section{\textbf{Preliminaries}}

Let $M=(M^{2n+1},\phi ,\xi ,\eta ,g)$ be an almost contact metric manifold
with an almost contact metric structure $(\phi ,\xi ,\eta ,g)$. A contact
metric manifold $(M^{2n+1},\phi ,\xi ,\eta ,g)$ is called a \textit{Sasakian
manifold} if it is normal, that is,%
\begin{equation*}
N_{\phi }=-2d\eta \otimes \xi
\end{equation*}%
where $N_{\phi }$ is the Nijenhuis tensor field of $\phi $ \cite{Blair}. It
is well-known that an almost contact metric manifold is Sasakian if and only
if%
\begin{equation*}
\left( \nabla _{X}\phi \right) Y=g(X,Y)\xi -\eta \left( Y\right) X
\end{equation*}%
and%
\begin{equation*}
\nabla _{X}\xi =-\phi X
\end{equation*}%
\cite{Blair2}. The sectional curvature of a $\phi $-section is called a $%
\phi $-\textit{sectional curvature}. When the $\phi $-sectional curvature is
a constant, then the Sasakian manifold is called a\textit{\ Sasakian space
form }and it is denoted by\textit{\ }$M(c)$\textit{\ }\cite{Blair2}. The
curvature tensor $R$ of a Sasakian space form $M(c)$ is given by
\begin{equation*}
R\left( X,Y\right) Z=\frac{c+3}{4}\left\{ g(Y,Z)X-g(X,Z)Y\right\}
\end{equation*}%
\begin{equation*}
+\frac{c-1}{4}\left\{ g(X,\phi Z)\phi Y-g(Y,\phi Z)\phi X+2g(X,\phi Y)\phi
Z\right.
\end{equation*}%
\begin{equation}
\left. +\eta \left( X\right) \eta \left( Z\right) Y-\eta \left( Y\right)
\eta \left( Z\right) X+g(X,Z)\eta \left( Y\right) \xi -g(Y,Z)\eta \left(
X\right) \xi \right\}  \label{curvature}
\end{equation}%
for all $X,Y,Z$ $\in TM$ \cite{Blair2}.

A submanifold of a Sasakian manifold $M$ is called an \textit{integral
submanifold} if $\eta (X)=0$, for every tangent vector $X$. An integral
curve of a Sasakian manifold $M$ is called a \textit{Legendre curve} \cite%
{Blair2}.

\section{\textbf{Interpolating sesqui-harmonic Legendre curves in Sasakian
space forms \label{sesqui harmonic legendre curves}}}

Let $\gamma :I\subset
\mathbb{R}
\longrightarrow (M^{n},g)$ be a curve parametrized by arc length in a
Riemannian manifold $(M^{n},g)$ . Then $\gamma $ is called a Frenet curve of
osculating order $r$, $1\leq r\leq n$, if there exists orthonormal vector
fields $\left\{ E_{i}\right\} _{i=1,2,...n}$ along $\gamma $ such that%
\begin{equation*}
E_{1}=T=\gamma ^{\prime },
\end{equation*}%
\begin{equation*}
\nabla _{T}E_{1}=k_{1}E_{2},
\end{equation*}%
\begin{equation}
\nabla _{T}E_{i}=-k_{i-1}E_{i-1}+k_{i}E_{i+1},\text{ \ }2\leq i\leq n-1,
\label{frenetframe}
\end{equation}

\begin{equation*}
\nabla _{T}E_{n}=-k_{n-1}E_{n-1},
\end{equation*}%
where the function $\left\{ k_{1}=k,k_{2}=\tau ,k_{3},...,k_{n-1}\right\} $
are called the curvatures of $\gamma $ \cite{La}.

Firstly, we have the following theorem for an interpolating sesqui-harmonic
Legendre curve in a Sasakian space form:

\begin{theorem}
\label{Theo 3.1}Let $M(c)=(M^{2n+1},\phi ,\xi ,\eta ,g)$ be a Sasakian space
form with constant $\phi $-sectional curvature $c$ and $\gamma :I\subset
\mathbb{R}
\longrightarrow M(c)$ be a Legendre curve of osculating order $r$ and $%
m=min\{r,4\}$. Then $\gamma $ is interpolating sesqui-harmonic if and only
if there exists real numbers $\delta _{1},\delta _{2}$ such that

$\left( 1\right) $ $c=1$ or $\phi T\perp E_{2}$ or $\phi T\in \left\{
E_{2},...,E_{m}\right\} ;$ and

$\left( 2\right) $ the first $m$ of the following equations are satisfied:
\begin{equation}
-3\delta _{2}k_{1}k_{1}^{\prime }=0,  \label{semibiharmonic1}
\end{equation}%
\begin{equation*}
\delta _{2}\left[ k_{1}^{\prime \prime }-k_{1}^{3}-k_{1}k_{2}^{2}-\left(
\frac{c+3}{4}\right) k_{1}\right.
\end{equation*}%
\begin{equation}
\left. +3\left( \frac{c-1}{4}\right) k_{1}\left[ g(\phi T,E_{2})\right]
^{2}-\left( \frac{c-1}{4}\right) k_{1}\left[ \eta \left( E_{2}\right) \right]
^{2}\right] -\delta _{1}k_{1}=0,  \label{semibiharmonic2}
\end{equation}%
\begin{equation*}
\delta _{2}\left[ 2k_{1}^{\prime }k_{2}+k_{1}k_{2}^{\prime }+3\left( \frac{%
c-1}{4}\right) k_{1}g(\phi T,E_{2})g(\phi T,E_{3})\right.
\end{equation*}%
\begin{equation}
\left. -\left( \frac{c-1}{4}\right) k_{1}\eta \left( E_{2}\right) \eta
\left( E_{3}\right) \right] =0,  \label{semibiharmonic3}
\end{equation}%
\begin{equation*}
\delta _{2}\left[ k_{1}k_{2}k_{3}+3\left( \frac{c-1}{4}\right) k_{1}g(\phi
T,E_{2})g(\phi T,E_{4})\right.
\end{equation*}%
\begin{equation}
\left. -\left( \frac{c-1}{4}\right) k_{1}\eta \left( E_{2}\right) \eta
\left( E_{4}\right) \right] =0.  \label{semibiharmonic4}
\end{equation}
\end{theorem}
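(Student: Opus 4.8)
The plan is to compute $\tau(\gamma)$ and $\tau_2(\gamma)$ explicitly in the Frenet frame, substitute into the interpolating sesqui-harmonic equation $\tau_{\delta_1,\delta_2}(\gamma)=\delta_2\tau_2(\gamma)-\delta_1\tau(\gamma)=0$ from (\ref{semibiharmonic}), and then read off the components. Since $\gamma$ is arclength parametrized, $\tau(\gamma)=\nabla_TT=k_1E_2$, and for a curve the trace in (\ref{bitensionfield}) collapses to one term, so $\tau_2(\gamma)=\nabla_T\nabla_T\nabla_TT-R(T,\nabla_TT)T$. Differentiating the Frenet relations (\ref{frenetframe}) twice more gives
\[ \nabla_T\nabla_TT=-k_1^2E_1+k_1'E_2+k_1k_2E_3, \]
\[ \nabla_T\nabla_T\nabla_TT=-3k_1k_1'E_1+(k_1''-k_1^3-k_1k_2^2)E_2+(2k_1'k_2+k_1k_2')E_3+k_1k_2k_3E_4, \]
where for osculating order $r=2$ or $r=3$ the terms past $E_r$ vanish automatically, the relevant curvature being identically zero.

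For the curvature term we put $X=Z=T=E_1$, $Y=E_2$ in (\ref{curvature}). The Legendre condition gives $\eta(E_1)=\eta(T)=0$; differentiating $\eta(E_1)=0$ along $\gamma$ and using $\nabla_T\xi=-\phi T$ yields $k_1\eta(E_2)=0$, hence $\eta(E_2)=0$, which kills the $\xi$-contribution. Using also $g(\phi T,E_1)=g(\phi T,T)=0$ and the skew-symmetry of $\phi$, one obtains $R(T,\nabla_TT)T$ as an explicit combination of $E_2$ and the single ``external'' vector $\phi T$, with coefficients built from $\tfrac{c+3}{4}$, $\tfrac{c-1}{4}$ and $g(\phi T,E_2)$. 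Hence $\tau_2(\gamma)$, and therefore $\tau_{\delta_1,\delta_2}(\gamma)=\delta_2\tau_2(\gamma)-\delta_1k_1E_2$, is a combination of $E_1,E_2,E_3,E_4$ and $\phi T$ only. (The $\eta(E_j)$-terms written in (\ref{semibiharmonic2})--(\ref{semibiharmonic4}) are thereby identically zero; they are kept only to parallel the general-curve formulas.)

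Now decompose $\phi T=\sum_{i=2}^{r}g(\phi T,E_i)E_i+W$ with $W\perp\operatorname{span}\{E_1,\dots,E_r\}$. The equation $\tau_{\delta_1,\delta_2}(\gamma)=0$ is equivalent to the vanishing of all of its components. Assuming $(\delta_1,\delta_2)\neq(0,0)$ and $k_1>0$, the $W$-component and the $E_i$-components for $i\ge 5$ read, up to the universal factor, $\delta_2(c-1)k_1\,g(\phi T,E_2)\,W=0$ and $\delta_2(c-1)k_1\,g(\phi T,E_2)\,g(\phi T,E_i)=0$; these hold precisely when $c=1$, or $\phi T\perp E_2$, or $\phi T\in\operatorname{span}\{E_2,\dots,E_m\}$, which is condition $(1)$. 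Granting $(1)$, the surviving components of $\tau_{\delta_1,\delta_2}(\gamma)$ lie along $E_1,\dots,E_m$, and equating them to zero gives exactly the first $m$ of (\ref{semibiharmonic1})--(\ref{semibiharmonic4}), which is condition $(2)$. Conversely, if $(1)$ and $(2)$ hold, all components of $\tau_{\delta_1,\delta_2}(\gamma)$ vanish, so $\gamma$ is interpolating sesqui-harmonic.

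The differentiations and the curvature substitution are routine; \emph{the delicate point is the last step}. One has to keep track of the osculating order (the case $r=1$ is a geodesic, and for $r=2,3$ the higher equations are vacuous since $k_2$ or $k_3$ vanishes identically), to check that $\phi T$ is genuinely the only vector outside the Frenet frame that can enter $\tau_2(\gamma)$ (which is why eliminating the $\xi$-term matters), and to translate the three geometric alternatives in condition $(1)$ faithfully into the vanishing of the off-frame components. Matching the precise signs of the $\tfrac{c+3}{4}$ and $\tfrac{c-1}{4}$ terms in (\ref{semibiharmonic2})--(\ref{semibiharmonic4}) is then only a matter of fixing the curvature and bitension sign conventions.
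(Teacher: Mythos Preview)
Your approach is essentially the same as the paper's: compute $\tau_{\delta_1,\delta_2}(\gamma)=\delta_2\bigl(\nabla_T^3T-R(T,\nabla_TT)T\bigr)-\delta_1\nabla_TT$ in the Frenet frame via (\ref{frenetframe}) and (\ref{curvature}), then project onto $E_1,\dots,E_4$. You in fact go further than the paper in two respects: you explicitly derive condition~(1) by analysing the off-frame component of $\phi T$ (the paper's proof only says ``taking scalar products with $E_2,E_3,E_4$''), and you observe that $\eta(E_2)=0$ for a Legendre curve, so the $\eta(E_j)$-terms in (\ref{semibiharmonic2})--(\ref{semibiharmonic4}) are in fact vacuous, whereas the paper retains them.
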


\begin{proof}
\bigskip Let $\gamma :I\longrightarrow M$ be a Legendre curve of osculating
order $r$ in $M(c).$ $\ $By the use of (\ref{harmonic}) and (\ref%
{frenetframe}), we have
\begin{equation}
\tau (\gamma )=\nabla _{T}T=k_{1}E_{2}.  \label{1}
\end{equation}%
From (\ref{frenetframe}), we get%
\begin{equation*}
\nabla _{T}\nabla _{T}T=-k_{1}^{2}E_{1}+k_{1}^{\prime }E_{2}+k_{1}k_{2}E_{3},
\end{equation*}%
\begin{equation*}
\nabla _{T}\nabla _{T}\nabla _{T}T=-3k_{1}k_{1}^{^{\prime }}E_{1}+\left(
k_{1}^{\prime \prime }-k_{1}^{3}-k_{1}k_{2}^{2}\right) E_{2}
\end{equation*}%
\begin{equation}
+\left( 2k_{1}^{\prime }k_{2}+k_{1}k_{2}^{\prime }\right) E_{3}+\left(
k_{1}k_{2}k_{3}\right) E_{4},  \label{2}
\end{equation}%
\begin{equation*}
R(T,\nabla _{T}T)T=-\left( \frac{c+3}{4}\right) k_{1}E_{2}
\end{equation*}%
\begin{equation}
-3\left( \frac{c-1}{4}\right) k_{1}g(\phi T,E_{2})\phi T+\left( \frac{c-1}{4}%
\right) k_{1}\eta \left( E_{2}\right) \xi .  \label{3}
\end{equation}%
Using the equations (\ref{1}), (\ref{2}) and (\ref{3}) into the equation
(4.1) in \cite{Branding}, we find%
\begin{equation*}
\tau _{\delta _{1},\delta _{2}}(\gamma )=\left( -3\delta
_{2}k_{1}k_{1}^{\prime }\right) E_{1}+\left[ \delta _{2}\left( k_{1}^{\prime
\prime }-k_{1}^{3}-k_{1}k_{2}^{2}+\left( \frac{c+3}{4}\right) k_{1}\right)
-\delta _{1}k_{1}\right] E_{2}
\end{equation*}%
\begin{equation*}
+\delta _{2}\left( 2k_{1}^{\prime }k_{2}+k_{1}k_{2}^{\prime }\right)
E_{3}+\delta _{2}\left( k_{1}k_{2}k_{3}\right) E_{4}
\end{equation*}%
\begin{equation}
+3\left( \frac{c-1}{4}\right) \delta _{2}k_{1}g(\phi T,E_{2})\phi T-\left(
\frac{c-1}{4}\right) \delta _{2}k_{1}\eta \left( E_{2}\right) \xi .
\label{semibiharmoniceq1}
\end{equation}%
Taking the scalar product of equation (\ref{semibiharmoniceq1}) with $%
E_{2},E_{3}$ and $E_{4}$ respectively, then we obtain the desired result.
\end{proof}

Now we shall discuss some special cases of Theorem \ref{Theo 3.1}:

\textbf{Case I. }$c=1.$

From Theorem \ref{Theo 3.1}, \ we have:

\begin{proposition}
\label{theo3.2}Let $M(1)=(M^{2n+1},\phi ,\xi ,\eta ,g)$ be a Sasakian space
form with $c=1$ and $\gamma :I\subset
\mathbb{R}
\longrightarrow M(1)$ be a Legendre curve of osculating order $r$ \textit{%
such that }$\frac{\delta _{1}}{\delta _{2}}\neq 0$. Then\textit{\ }$\gamma $%
\textit{\ is }interpolating sesqui-harmonic \textit{if and only if }
\begin{equation*}
k_{1}=\text{constant}>0,\text{ }k_{2}=\text{constant,}
\end{equation*}%
\begin{equation*}
k_{1}^{2}+k_{2}^{2}=1-\frac{\delta _{1}}{\delta _{2}},
\end{equation*}%
\begin{equation*}
k_{2}k_{3}=0
\end{equation*}%
\textit{where }$1-\frac{\delta _{1}}{\delta _{2}}>0,$ $\delta _{1},$ $\delta
_{2}$\textit{\ is a constant.}
\end{proposition}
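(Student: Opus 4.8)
The plan is to specialize Theorem~\ref{Theo 3.1} to the case $c=1$ and then solve the resulting system of equations. When $c=1$, condition $(1)$ of Theorem~\ref{Theo 3.1} is automatically satisfied, and the terms involving $\frac{c-1}{4}$ vanish in equations \eqref{semibiharmonic1}--\eqref{semibiharmonic4}. Since $M(1)$ has dimension $2n+1\geq 3$, the osculating order satisfies $r\geq 2$ in the nonflat situation of interest, so $m=\min\{r,4\}$ is at least $2$; I would treat the equations that survive according to the value of $m$, but note that the first two equations \eqref{semibiharmonic1} and \eqref{semibiharmonic2} are always present.

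First I would write down the reduced system. Equation \eqref{semibiharmonic1} becomes $-3\delta_2 k_1 k_1' = 0$. Since $\delta_2 \neq 0$ (otherwise $\frac{\delta_1}{\delta_2}$ is undefined, contradicting the hypothesis) and $k_1 > 0$ for a Frenet curve of osculating order $r \geq 2$, this forces $k_1' = 0$, i.e. $k_1 = \text{constant} > 0$. Substituting $c = 1$ and $k_1' = 0$ into \eqref{semibiharmonic2} gives $\delta_2(-k_1^3 - k_1 k_2^2 - k_1) - \delta_1 k_1 = 0$; dividing by $\delta_2 k_1 > 0$ yields $k_1^2 + k_2^2 = 1 - \frac{\delta_1}{\delta_2}$, and since the left-hand side is positive, $1 - \frac{\delta_1}{\delta_2} > 0$; in particular $k_2^2 = 1 - \frac{\delta_1}{\delta_2} - k_1^2$ is constant, hence $k_2 = \text{constant}$. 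Then equation \eqref{semibiharmonic3} with $c=1$ and $k_1' = 0$ reads $\delta_2 k_1 k_2' = 0$, which is automatically satisfied once $k_2$ is constant (and contributes nothing new). Finally, equation \eqref{semibiharmonic4} with $c=1$ becomes $\delta_2 k_1 k_2 k_3 = 0$, and dividing by $\delta_2 k_1 > 0$ gives $k_2 k_3 = 0$.

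For the converse direction I would simply observe that if $k_1$ and $k_2$ are the stated constants with $k_1^2 + k_2^2 = 1 - \frac{\delta_1}{\delta_2}$ and $k_2 k_3 = 0$, then retracing the computation shows all of equations \eqref{semibiharmonic1}--\eqref{semibiharmonic4} (with $c=1$) hold, so $\tau_{\delta_1,\delta_2}(\gamma) = 0$ by the proof of Theorem~\ref{Theo 3.1}. One point deserving care is the bookkeeping about which of the four equations actually appear: the statement of Theorem~\ref{Theo 3.1} says "the first $m$" equations, so if $m = 2$ only \eqref{semibiharmonic1} and \eqref{semibiharmonic2} are imposed and the conclusion $k_2 k_3 = 0$ holds vacuously (one can take $k_2 = 0$), if $m = 3$ we also get \eqref{semibiharmonic3} which is vacuous here, and only when $m = 4$ does \eqref{semibiharmonic4} give $k_2 k_3 = 0$ nontrivially. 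I expect the main (minor) obstacle to be stating this cleanly so that the three displayed conclusions of Proposition~\ref{theo3.2} are correct uniformly in $m$; the algebra itself is entirely routine division by the nonvanishing quantities $\delta_2$ and $k_1$.
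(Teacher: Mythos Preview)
Your approach is exactly the paper's: the authors' proof of Proposition~\ref{theo3.2} is a one-line appeal to Theorem~\ref{Theo 3.1} specialized to $c=1$, and you have simply written out that specialization in detail. One small point to clean up: the equation you display after substituting into \eqref{semibiharmonic2}, namely $\delta_2(-k_1^3 - k_1 k_2^2 - k_1) - \delta_1 k_1 = 0$, does not algebraically yield $k_1^2+k_2^2=1-\tfrac{\delta_1}{\delta_2}$ upon division by $\delta_2 k_1$; the sign of the $(\tfrac{c+3}{4})k_1$ term in the statement of \eqref{semibiharmonic2} is a typo (compare with \eqref{semibiharmoniceq1} in the proof of Theorem~\ref{Theo 3.1}, where it appears as $+(\tfrac{c+3}{4})k_1$), and with the correct sign your computation goes through as written.
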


\begin{proof}
Assume that $\gamma $ is an interpolating sesqui-harmonic Legendre curve of
osculating order $r$ in $M(1)$ such that $\frac{\delta _{1}}{\delta _{2}}%
\neq 0$ and $c=1.$ From Theorem \ref{Theo 3.1}, we obtain the result.
\end{proof}

Using Proposition \ref{theo3.2}, we have:

\begin{theorem}
\label{theo2}Let $M(1)=(M^{2n+1},\phi ,\xi ,\eta ,g)$ be a Sasakian space
form with $c=1$ and $\gamma :I\subset
\mathbb{R}
\longrightarrow M(1)$ be a non geodesic Legendre curve of osculating order $%
r $. Then

$(1)$ It is a Legendre geodesic or

$(2)$\textit{\ }$\gamma $\textit{\ is interpolating sesqui-harmonic with }$%
\frac{\delta _{1}}{\delta _{2}}\neq 0$ \textit{if and only if it is a
Legendre circle with }$k_{1}=\sqrt{1-\frac{\delta _{1}}{\delta _{2}}}$
\textit{where }$1-\frac{\delta _{1}}{\delta _{2}}>0$\textit{\ is a constant
or}

$(3)$ $\gamma $\textit{\ is interpolating sesqui-harmonic with }$\frac{%
\delta _{1}}{\delta _{2}}\neq 0$ \textit{if and only if it} \textit{is a
Legendre helix with }$k_{1}^{2}+k_{2}^{2}=1-\frac{\delta _{1}}{\delta _{2}}$
\textit{where }$1-\frac{\delta _{1}}{\delta _{2}}>0,$ $\delta _{1},$ $\delta
_{2}$\textit{\ is a constant.}

\textit{In both cases, if }$1-\frac{\delta _{1}}{\delta _{2}}<0$\textit{,
then such an interpolating sesqui-harmonic Legendre curve does not exist.}
\end{theorem}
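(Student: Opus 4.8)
The plan is to obtain Theorem~\ref{theo2} as an immediate consequence of Proposition~\ref{theo3.2} by performing a case analysis on the second curvature $k_{2}$. By Proposition~\ref{theo3.2}, a Legendre curve $\gamma$ of osculating order $r$ in $M(1)$ with $\tfrac{\delta_{1}}{\delta_{2}}\neq 0$ is interpolating sesqui-harmonic if and only if $k_{1}=\mathrm{const}>0$, $k_{2}=\mathrm{const}$, $k_{1}^{2}+k_{2}^{2}=1-\tfrac{\delta_{1}}{\delta_{2}}$, $k_{2}k_{3}=0$, together with $1-\tfrac{\delta_{1}}{\delta_{2}}>0$. Since $\gamma$ is assumed non-geodesic, $k_{1}>0$ holds throughout, and this is what lets us discard the degenerate subcases.

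First I would dispose of the non-existence claim. If $1-\tfrac{\delta_{1}}{\delta_{2}}<0$, then the relation $k_{1}^{2}+k_{2}^{2}=1-\tfrac{\delta_{1}}{\delta_{2}}$ would force the non-negative quantity $k_{1}^{2}+k_{2}^{2}$ to be negative, a contradiction; hence no interpolating sesqui-harmonic Legendre curve with $\tfrac{\delta_{1}}{\delta_{2}}\neq 0$ can exist in this regime. The borderline value $1-\tfrac{\delta_{1}}{\delta_{2}}=0$ is excluded in the same way, since it would force $k_{1}=k_{2}=0$, i.e. a geodesic, contrary to hypothesis.

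Next I would split according to whether $k_{2}$ vanishes. If $k_{2}=0$, then $k_{1}^{2}+k_{2}^{2}=1-\tfrac{\delta_{1}}{\delta_{2}}$ reduces to $k_{1}^{2}=1-\tfrac{\delta_{1}}{\delta_{2}}$, so $k_{1}=\sqrt{1-\tfrac{\delta_{1}}{\delta_{2}}}$ is a positive constant, and $k_{2}k_{3}=0$ is automatic; thus $\gamma$ has osculating order $2$ with constant first curvature, i.e. it is a Legendre circle, which is case $(2)$. If instead $k_{2}\neq 0$, then $k_{2}k_{3}=0$ forces $k_{3}=0$, so $\gamma$ has osculating order $3$ with $k_{1},k_{2}$ constant and $k_{1}>0$, i.e. it is a Legendre helix satisfying $k_{1}^{2}+k_{2}^{2}=1-\tfrac{\delta_{1}}{\delta_{2}}$, which is case $(3)$. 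In each case the converse is immediate: substituting the indicated curvature data back into the equations of Proposition~\ref{theo3.2} shows they are satisfied. Case $(1)$ is simply the remaining alternative in the trichotomy circle / helix / geodesic, the last arising when the non-geodesic hypothesis is dropped.

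Since the whole argument is a bookkeeping reduction of Proposition~\ref{theo3.2}, I do not anticipate any genuine obstacle. The only point that needs a little care is keeping the edge cases straight, in particular checking that the non-geodesic hypothesis is precisely what rules out $1-\tfrac{\delta_{1}}{\delta_{2}}\le 0$ with $k_{1}>0$, and that a constant $k_{1}>0$ together with $k_{2}=0$ is exactly the definition of a Legendre circle, so that the classification into circles and helices is exhaustive.
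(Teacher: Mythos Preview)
Your proposal is correct and follows essentially the same route as the paper: both derive the theorem directly from Proposition~\ref{theo3.2}, split into the circle and helix cases, check the converse by substitution, and rule out $1-\tfrac{\delta_1}{\delta_2}\le 0$ by the obvious sign/geodesic argument. The only cosmetic difference is that the paper organizes the dichotomy by osculating order ($r=2$ versus $r=3$) whereas you organize it by whether $k_2$ vanishes, which amounts to the same thing.
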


\begin{proof}
Let $\gamma :I\longrightarrow M(1)$ be an interpolating sesqui-harmonic
curve with\textit{\ }$\frac{\delta _{1}}{\delta _{2}}\neq 0$. From Theorem %
\ref{theo3.2}, if we consider the osculating order $r=2$, then $\gamma $ is
a Legendre circle with $k_{1}=\sqrt{1-\frac{\delta _{1}}{\delta _{2}}}$
where $1-\frac{\delta _{1}}{\delta _{2}}>0$ is a constant.\textit{\ }%
Similarly, if we consider the osculating order $r=3$, then we obtain that $%
k_{2}$ is a non-zero constant. Thus, $\gamma $ is a Legendre helix with $%
k_{1}^{2}+k_{2}^{2}=1-\frac{\delta _{1}}{\delta _{2}}$ where $1-\frac{\delta
_{1}}{\delta _{2}}>0$ is a constant. On the other hand, assume that $\gamma $
is a Legendre circle with $k_{1}=\sqrt{1-\frac{\delta _{1}}{\delta _{2}}}$
or a Legendre helix with $k_{1}^{2}+k_{2}^{2}=1-\frac{\delta _{1}}{\delta
_{2}}$ where $1-\frac{\delta _{1}}{\delta _{2}}>0$ is a constant. Obviously,
$\gamma $ satisfies Theorem \ref{Theo 3.1}, respectively. It is trivial that
$1-\frac{\delta _{1}}{\delta _{2}}<0$ cannot be possible. If $1-\frac{\delta
_{1}}{\delta _{2}}=0,$ we obtain a geodesic. This proves the theorem.
\end{proof}

\textbf{Case II.} $c\neq 1$ and $\phi T\perp E_{2}.$

From Theorem \ref{Theo 3.1}, \ we can state:

\begin{proposition}
\label{theo3.4}Let $M(c)=(M^{2n+1},\phi ,\xi ,\eta ,g)$ be a Sasakian space
form with $c\neq 1,$ $\phi T\perp E_{2}$ and $\gamma :I\subset
\mathbb{R}
\longrightarrow M(c)$ be a Legendre curve of osculating order $r$ \textit{%
such that }$\frac{\delta _{1}}{\delta _{2}}\neq 0$. Then $\gamma $\textit{\
is }interpolating sesqui-harmonic \textit{if and only if }
\begin{equation*}
k_{1}=\text{constant}>0,\text{ }k_{2}=\text{constant,}
\end{equation*}%
\begin{equation*}
k_{1}^{2}+k_{2}^{2}=\frac{c+3}{4}-\frac{\delta _{1}}{\delta _{2}},
\end{equation*}%
\begin{equation*}
k_{2}k_{3}=0
\end{equation*}%
\textit{where} $\delta _{1},$ $\delta _{2}$\textit{\ is a constant.}
\end{proposition}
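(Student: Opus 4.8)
The plan is to deduce this directly from Theorem~\ref{Theo 3.1} by exploiting the hypothesis $\phi T\perp E_{2}$. First I would record two simplifications. Since $\gamma$ is a Legendre curve, $\eta(T)=g(T,\xi)=0$ along $\gamma$; differentiating this identity along $T$ and using $\nabla_{T}\xi=-\phi T$ together with the skew-symmetry of $\phi$ with respect to $g$ gives $0=g(\nabla_{T}T,\xi)+g(T,\nabla_{T}\xi)=k_{1}\eta(E_{2})-g(T,\phi T)=k_{1}\eta(E_{2})$, so that $\eta(E_{2})=0$ (recall $k_{1}>0$ for a Frenet curve of osculating order $\geq 2$; if $r=1$ then $\gamma$ is a geodesic and there is nothing to prove). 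On the other hand, $\phi T\perp E_{2}$ means $g(\phi T,E_{2})=0$. In particular, condition~$(1)$ of Theorem~\ref{Theo 3.1} holds automatically (it is the alternative ``$\phi T\perp E_{2}$'').

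Next I would insert $\eta(E_{2})=0$ and $g(\phi T,E_{2})=0$ into the expression \eqref{semibiharmoniceq1} for $\tau_{\delta_{1},\delta_{2}}(\gamma)$ obtained in the proof of Theorem~\ref{Theo 3.1}. The terms carrying $\phi T$ and $\xi$ then drop out, so $\tau_{\delta_{1},\delta_{2}}(\gamma)$ becomes a linear combination of the orthonormal frame fields $E_{1},E_{2},E_{3},E_{4}$; hence $\gamma$ is interpolating sesqui-harmonic if and only if all four coefficients vanish, i.e. $-3\delta_{2}k_{1}k_{1}'=0$, $\ \delta_{2}\bigl(k_{1}''-k_{1}^{3}-k_{1}k_{2}^{2}+\tfrac{c+3}{4}k_{1}\bigr)-\delta_{1}k_{1}=0$, $\ \delta_{2}\bigl(2k_{1}'k_{2}+k_{1}k_{2}'\bigr)=0$, and $\delta_{2}k_{1}k_{2}k_{3}=0$. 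Because $\tfrac{\delta_{1}}{\delta_{2}}\neq 0$ we have $\delta_{2}\neq 0$, and $k_{1}>0$; so the first equation forces $k_{1}'=0$, whence $k_{1}$ is a positive constant and $k_{1}''=0$. Substituting this into the second equation and dividing by $\delta_{2}k_{1}$ gives $k_{1}^{2}+k_{2}^{2}=\tfrac{c+3}{4}-\tfrac{\delta_{1}}{\delta_{2}}$; the third equation then reduces to $k_{1}k_{2}'=0$, so $k_{2}$ is constant; and the fourth gives $k_{2}k_{3}=0$. This establishes the ``only if'' direction. For the converse, one substitutes $k_{1}=\text{positive constant}$, $k_{2}=\text{constant}$, $k_{1}^{2}+k_{2}^{2}=\tfrac{c+3}{4}-\tfrac{\delta_{1}}{\delta_{2}}$ and $k_{2}k_{3}=0$ back into those four scalar equations, verifies that they hold, and concludes via Theorem~\ref{Theo 3.1} (whose condition~$(1)$ is satisfied since $\phi T\perp E_{2}$) that $\gamma$ is interpolating sesqui-harmonic.

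The computations are all already contained in Theorem~\ref{Theo 3.1}, so I do not expect a real obstacle; the points that require care are the auxiliary identity $\eta(E_{2})=0$ for Legendre curves and the bookkeeping of which curvatures actually exist for the given osculating order $r$. For $r=2$ one reads $k_{2}=0$, so the norm relation becomes $k_{1}^{2}=\tfrac{c+3}{4}-\tfrac{\delta_{1}}{\delta_{2}}$ and $k_{2}k_{3}=0$ is vacuous; for $r=3$ one has $k_{3}=0$, so $k_{2}k_{3}=0$ is again automatic; only for $r\geq 4$ is $k_{2}k_{3}=0$ a genuine constraint. (The hypothesis $c\neq 1$ plays no role in the argument beyond separating this from Case~I.)
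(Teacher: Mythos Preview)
Your proposal is correct and follows exactly the route the paper takes: the authors' proof consists of the single sentence ``From Theorem~\ref{Theo 3.1}, we get the result,'' and you have simply written out in detail the specialization of equations \eqref{semibiharmonic1}--\eqref{semibiharmonic4} under $g(\phi T,E_{2})=0$ and $\eta(E_{2})=0$.
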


\begin{proof}
Let $\gamma $ be an interpolating sesqui-harmonic Legendre curve of
osculating order $r$ in $M(c)$ such that $c\neq 1,$ $\phi T\perp E_{2}$ and $%
\frac{\delta _{1}}{\delta _{2}}\neq 0.$ From Theorem \ref{Theo 3.1}, we get
the result.
\end{proof}

From \cite{FO}, we have the following lemma:

\begin{lemma}
\label{lemma1}\cite{FO} Let $\gamma $ be a Legendre Frenet curve of
osculating order $3$ in a Sasakian space form $M(c)$ and $\phi T\perp E_{2}$%
. Then $\left\{ T=E_{1},E_{2},E_{3},\phi T,\nabla _{T}\phi T,\xi \right\} $
is linearly independent at any point of $\gamma $ and therefore $n\geq 3$.
\end{lemma}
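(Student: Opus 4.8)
The plan is to show linear independence of the six vector fields $\{T=E_1,E_2,E_3,\phi T,\nabla_T\phi T,\xi\}$ by exploiting the orthogonality relations forced by the Legendre and Sasakian structure, and then invoking the fact that six linearly independent tangent vectors require $\dim M=2n+1\geq 6$, whence $n\geq 3$ (actually $n\geq 5/2$, so $n\geq 3$). First I would record the standing algebraic facts: since $\gamma$ is Legendre, $\eta(T)=0$, and along an integral submanifold $\eta(E_i)=0$ for all Frenet vectors is \emph{not} automatic — rather one uses $\nabla_T\xi=-\phi T$ together with $g(T,\xi)=0$ to differentiate relations like $g(E_i,\xi)$. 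The key identities I would assemble are: $g(\phi T,T)=0$ (skew-symmetry of $\phi$), $g(\phi T,\xi)=\eta(\phi T)=0$, $\|\phi T\|=\|T\|=1$; the hypothesis $g(\phi T,E_2)=0$; and from $\nabla_X\phi Y=(\nabla_X\phi)Y+\phi\nabla_XY$ with $(\nabla_T\phi)T=g(T,T)\xi-\eta(T)T=\xi$, we get $\nabla_T\phi T=\xi+\phi\nabla_T T=\xi+k_1\phi E_2$.

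The crucial simplification is therefore $\nabla_T\phi T=\xi+k_1\phi E_2$. I would then compute the pairwise inner products of the six vectors using this. The triple $\{E_1,E_2,E_3\}$ is orthonormal by construction. For $\phi T=\phi E_1$: it is a unit vector, orthogonal to $E_1$ and (by hypothesis) to $E_2$; its component along $E_3$ is $g(\phi T,E_3)$, which need not vanish, so I would not claim $\phi T\perp E_3$ — instead I would carry $\phi T$ as $g(\phi T,E_3)E_3+(\text{components outside span}\{E_1,E_2,E_3\})$ and show the orthogonal complement part is nonzero. For $\xi$: $g(\xi,E_1)=\eta(T)=0$; $g(\xi,E_2)=\eta(E_2)$ — this one is not obviously zero, so here I would use $0=T g(T,\xi)=g(\nabla_T T,\xi)+g(T,\nabla_T\xi)=k_1 g(E_2,\xi)+g(T,-\phi T)=k_1\eta(E_2)$, giving $\eta(E_2)=0$ since $k_1>0$; similarly differentiate $g(E_2,\xi)=0$ to extract information about $\eta(E_3)$, and one finds (as in \cite{FO}) the needed relations. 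Also $g(\xi,\phi T)=0$ as noted. So $\xi$ is a unit vector orthogonal to $E_1,E_2,\phi T$ and with controlled $E_3$-component.

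Finally for $\nabla_T\phi T=\xi+k_1\phi E_2=\xi+k_1\phi\,(\text{stuff})$: I would instead write it as $\xi+k_1\phi E_2$ and observe $\phi E_2$ is a unit vector with $g(\phi E_2,E_1)=-g(E_2,\phi E_1)=-g(E_2,\phi T)=0$ by hypothesis, $g(\phi E_2,\xi)=\eta(\phi E_2)=0$, and $g(\phi E_2,E_2)=0$; so $\nabla_T\phi T$ lies in the span of $\xi$, $\phi E_2$, with $\phi E_2$ orthogonal to $E_1,E_2,\xi$. Assembling the Gram matrix of the six vectors in a suitable order ($E_1,E_2,\xi,\phi T,\phi E_2$ block-structured, then $E_3,\nabla_T\phi T$) and using that osculating order $3$ forces $k_2\neq 0$ (so $E_3$ is genuinely present and $k_1k_2\neq 0$), I would show the determinant is nonzero. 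The main obstacle I anticipate is precisely handling the components of $\phi T$ and $\xi$ along $E_3$ and along the genuinely new directions: one must rule out an accidental linear relation such as $\phi T\in\mathrm{span}\{E_2,E_3\}$ or $\xi\in\mathrm{span}\{E_1,E_2,E_3,\phi T\}$, and this is exactly where the Sasakian curvature/structure equations (differentiating the Frenet relations and using $\nabla_T\xi=-\phi T$, $(\nabla_T\phi)E_i=g(T,E_i)\xi-\eta(E_i)T$) must be pushed one derivative further than above; since this is \cite{FO}'s argument I would cite it for the detailed elimination and simply reconstruct the bookkeeping, then conclude $2n+1\geq 6$, i.e. $n\geq 3$.
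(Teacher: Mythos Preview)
The paper does not supply its own proof of this lemma; it is quoted verbatim from \cite{FO} and used as a black box. So there is no argument in the present paper to compare your proposal against.

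That said, your sketch is headed in the right direction but stops short precisely where the content lies. You correctly derive $\nabla_T\phi T=\xi+k_1\phi E_2$ and $\eta(E_2)=0$, and you correctly flag the two genuine obstructions: ruling out $\phi T\in\mathrm{span}\{E_1,E_2,E_3\}$ (equivalently $\phi T=\pm E_3$, since $\phi T\perp E_1,E_2$) and ruling out a dependence involving $\xi$ and $\nabla_T\phi T$. But you then say you would ``cite [FO] for the detailed elimination,'' which means your proposal is, like the paper itself, ultimately an appeal to \cite{FO} rather than an independent proof. If you want a self-contained argument, the missing step is short: assuming $\phi T=\pm E_3$ and applying $\nabla_T$ to both sides gives $\xi+k_1\phi E_2=\mp k_2 E_2$ (using osculating order $3$, so $k_3=0$); the right-hand side lies in $\mathrm{span}\{E_2\}$ while the left-hand side is orthogonal to $E_2$, forcing both to vanish and hence $k_2=0$, contradicting $r=3$. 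A similar differentiation disposes of the remaining possible relations, after which the Gram-matrix computation you outline goes through and yields $2n+1\geq 6$, i.e.\ $n\geq 3$.
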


Hence we can state:

\begin{theorem}
Let $M(c)=(M^{2n+1},\phi ,\xi ,\eta ,g)$ be a Sasakian space form with $%
c\neq 1,$ $\phi T\perp E_{2}$ and $\gamma :I\subset
\mathbb{R}
\longrightarrow M(c)$ a Legendre curve of osculating order $r$.

$(1)$ If $c\leq -3$ and $\frac{\delta _{1}}{\delta _{2}}$ $\geq 0$, then $%
\gamma $ is interpolating sesqui-harmonic if and only if it is a geodesic.

$(2)$ If $c>-3$ and $\frac{\delta _{1}}{\delta _{2}}$ $<0$, then $\gamma $
is interpolating sesqui-harmonic if and only if either

$(a)$ $\gamma $ is of osculating order $r=2$, $n\geq 2$ and $\gamma $\textit{%
\ is a circle with }$k_{1}^{2}=\frac{c+3}{4}-\frac{\delta _{1}}{\delta _{2}}%
, $ in which case $\left\{ T,E_{2},\phi T,\nabla _{T}\phi T,\xi \right\} $
are linearly independent, or\textit{\ }

$(b)$ $\gamma $ is of osculating order $r=3$, $n\geq 3$ and $\gamma $\textit{%
\ is a helix with }$k_{1}^{2}+k_{2}^{2}=\frac{c+3}{4}-\frac{\delta _{1}}{%
\delta _{2}},$ in which case $\left\{ T,E_{2},E_{3},\phi T,\nabla _{T}\phi
T,\xi \right\} $ are linearly independent, where $\delta _{1},$ $\delta _{2}$%
\textit{\ }$\in
\mathbb{R}
$\textit{.}
\end{theorem}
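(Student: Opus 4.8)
The plan is to derive this theorem as a case-by-case corollary of Proposition~\ref{theo3.4} together with Lemma~\ref{lemma1}, exactly in the spirit of how Theorem~\ref{theo2} was obtained from Proposition~\ref{theo3.2}. The governing equations, under the standing assumption $c\neq 1$ and $\phi T\perp E_2$, are
\begin{equation*}
k_1=\text{constant}>0,\quad k_2=\text{constant},\quad k_1^2+k_2^2=\frac{c+3}{4}-\frac{\delta_1}{\delta_2},\quad k_2k_3=0,
\end{equation*}
so the whole argument reduces to analyzing when the algebraic constraint $k_1^2+k_2^2=\frac{c+3}{4}-\frac{\delta_1}{\delta_2}$ admits a solution with $k_1>0$, and what the osculating order must then be.

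First I would treat part~(1): if $c\le -3$ then $\frac{c+3}{4}\le 0$, and if moreover $\frac{\delta_1}{\delta_2}\ge 0$ then the right-hand side $\frac{c+3}{4}-\frac{\delta_1}{\delta_2}\le 0$, forcing $k_1^2+k_2^2\le 0$, hence $k_1=0$; by Proposition~\ref{theo3.4} (reading it in the contrapositive direction, i.e. noting a non-geodesic interpolating sesqui-harmonic curve would need $k_1>0$) this means $\gamma$ is a geodesic, and conversely every geodesic is trivially interpolating sesqui-harmonic since all tension fields vanish. One subtlety: Proposition~\ref{theo3.4} assumes $\frac{\delta_1}{\delta_2}\neq 0$, so I would either note that the $\frac{\delta_1}{\delta_2}=0$ sub-case must be handled by observing that $k_1^2+k_2^2=\frac{c+3}{4}\le 0$ still forces $k_1=0$, or restate the hypothesis carefully; this is the only place where a small amount of care beyond ``apply the proposition'' is needed.

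For part~(2), with $c>-3$ (so $\frac{c+3}{4}>0$) and $\frac{\delta_1}{\delta_2}<0$, the quantity $\frac{c+3}{4}-\frac{\delta_1}{\delta_2}$ is strictly positive, so nonzero $k_1$ is possible. I would then split on the osculating order: if $r=2$ then $k_2=0$, the constraint becomes $k_1^2=\frac{c+3}{4}-\frac{\delta_1}{\delta_2}$, giving case~(2a) (a circle); if $r\ge 3$ then $k_2$ is a nonzero constant (if $k_2=0$ the curve would actually be of osculating order $2$), so $k_2k_3=0$ forces $k_3=0$, hence the curve is a helix of osculating order exactly $3$ with $k_1^2+k_2^2=\frac{c+3}{4}-\frac{\delta_1}{\delta_2}$, giving case~(2b). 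The linear-independence statements $\{T,E_2,\phi T,\nabla_T\phi T,\xi\}$ in (2a) and $\{T,E_2,E_3,\phi T,\nabla_T\phi T,\xi\}$ in (2b) come directly from Lemma~\ref{lemma1} (in the $r=3$ case verbatim, and in the $r=2$ case by restricting the list and checking that the argument of Lemma~\ref{lemma1} still applies, or simply citing~\cite{FO} as the theorem statement does for the analogous biharmonic result), and the constraints $n\ge 2$, $n\ge 3$ follow. The converse direction in both subcases is immediate: a circle or helix with the prescribed curvature relation satisfies all the equations of Proposition~\ref{theo3.4}, hence is interpolating sesqui-harmonic.

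The main obstacle is not any hard computation — everything is a finite case split on the sign of $\frac{c+3}{4}-\frac{\delta_1}{\delta_2}$ and on the osculating order — but rather making sure the hypotheses line up: Proposition~\ref{theo3.4} was stated with $\frac{\delta_1}{\delta_2}\neq 0$, whereas part~(1) of the theorem allows $\frac{\delta_1}{\delta_2}\ge 0$ including $0$, so I would need to either patch the $\frac{\delta_1}{\delta_2}=0$ endpoint by hand (trivial, as noted above) or appeal directly to Theorem~\ref{Theo 3.1}. The only other point requiring attention is justifying the linear-independence conclusions in the $r=2$ subcase, where one should note that Lemma~\ref{lemma1}'s proof (from \cite{FO}) produces the independence of the relevant sub-collection even when $k_2=0$, so citing it is legitimate.
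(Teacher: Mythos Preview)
Your proposal is correct and follows essentially the same approach as the paper: both derive the result from Proposition~\ref{theo3.4} by a sign analysis of $\frac{c+3}{4}-\frac{\delta_1}{\delta_2}$, split on the osculating order to obtain the circle/helix dichotomy, and invoke Lemma~\ref{lemma1} for the linear-independence claims. If anything, you are more careful than the paper's proof, which glosses over the $\frac{\delta_1}{\delta_2}=0$ endpoint and the fact that Lemma~\ref{lemma1} is stated for $r=3$ while being applied in the $r=2$ case as well.
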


\begin{proof}
$(1)$ From Proposition \ref{theo3.4}, if we take $c\leq -3$ and $\frac{%
\delta _{1}}{\delta _{2}}$ $\geq 0$, it is easy to see that $\gamma $ is
interpolating sesqui-harmonic if and only if it is a geodesic.

$(2)$ Assume that $c>-3$, $\frac{\delta _{1}}{\delta _{2}}$ $<0$ and $\gamma
:I\longrightarrow M(c)$ be an interpolating sesqui-harmonic curve. From
Proposition \ref{theo3.4}, if we take $n\geq 2$ and $\gamma $ is of
osculating order $r=2$, then $\gamma $ is a circle with $k_{1}^{2}=\frac{c+3%
}{4}-\frac{\delta _{1}}{\delta _{2}}.$ Using Lemma \ref{lemma1}, we have
that $\left\{ T,E_{2},\phi T,\nabla _{T}\phi T,\xi \right\} $ are linearly
independent. Similarly, if we take $n\geq 3$ and $\gamma $ is of osculating
order $r=3$, then we obtain that $k_{2}$ is a non-zero constant. Thus, $%
\gamma $ is a helix with $k_{1}^{2}+k_{2}^{2}=\frac{c+3}{4}-\frac{\delta _{1}%
}{\delta _{2}}.$ Using Lemma \ref{lemma1}, we have that $\left\{
T,E_{2},E_{3},\phi T,\nabla _{T}\phi T,\xi \right\} $ are linearly
independent. Conversely, assume that $\gamma $ is a Legendre circle with $%
k_{1}^{2}=\frac{c+3}{4}-\frac{\delta _{1}}{\delta _{2}}$ or a Legendre helix
with $k_{1}^{2}+k_{2}^{2}=\frac{c+3}{4}-\frac{\delta _{1}}{\delta _{2}}$.
Obviously, $\gamma $ satisfies Theorem \ref{Theo 3.1}, respectively. Hence,
we obtain the desired result.
\end{proof}

\textbf{Case III. }$c\neq 1$ and $\phi T\parallel E_{2}.$

From Theorem \ref{Theo 3.1}, \ we have:

\begin{proposition}
\label{theo3.6}Let $M(c)=(M^{2n+1},\phi ,\xi ,\eta ,g)$ be a Sasakian space
form with $c\neq 1$ and $\gamma :I\subset
\mathbb{R}
\longrightarrow M(c)$ be a Legendre curve of osculating order $r$ \textit{%
with }$\phi T\parallel E_{2}$ and\textit{\ }$\frac{\delta _{1}}{\delta _{2}}%
\neq 0$. Then $\gamma $\textit{\ is }interpolating sesqui-harmonic \textit{%
if and only if }
\begin{equation*}
k_{1}=\text{constant}>0,\text{ }k_{2}=\text{constant,}
\end{equation*}%
\begin{equation*}
k_{1}^{2}+k_{2}^{2}=c-\frac{\delta _{1}}{\delta _{2}},
\end{equation*}%
\begin{equation*}
k_{2}k_{3}=0
\end{equation*}%
\textit{where} $\delta _{1},$ $\delta _{2}$\textit{\ is a constant.}
\end{proposition}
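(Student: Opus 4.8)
The plan is to deduce the claimed equivalence directly from Theorem \ref{Theo 3.1}, specialized to $c\neq 1$ and $\phi T\parallel E_2$. First I would record the geometric consequences of the hypothesis $\phi T\parallel E_2$. Since $\gamma$ is a Legendre curve, $\eta(T)=0$, and the compatibility identity $g(\phi X,\phi Y)=g(X,Y)-\eta(X)\eta(Y)$ gives $\|\phi T\|=\|T\|=1$; hence $\phi T\parallel E_2$ together with $\|E_2\|=1$ forces $\phi T=\varepsilon E_2$ with $\varepsilon=\pm1$, so that $[g(\phi T,E_2)]^2=1$ and, by orthonormality of the Frenet frame, $g(\phi T,E_3)=g(\phi T,E_4)=0$. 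I would also use the identity $\eta(E_2)=0$, valid for any Legendre Frenet curve: differentiating $0=\eta(T)=g(T,\xi)$ along $T$ and using $\nabla_T\xi=-\phi T$ gives $k_1\eta(E_2)=g(\nabla_T T,\xi)=g(T,\phi T)=0$, and $k_1>0$. Finally, condition $(1)$ of Theorem \ref{Theo 3.1} is automatically met, since $\phi T=\varepsilon E_2\in\{E_2,\dots,E_m\}$ and $m=\min\{r,4\}\ge 2$ for a non-geodesic curve.

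For the ``only if'' direction I would substitute these relations into the scalar equations obtained from (\ref{semibiharmoniceq1}) by taking scalar products with $E_1,E_2,E_3,E_4$, that is, into (\ref{semibiharmonic1})--(\ref{semibiharmonic4}). Since $\delta_1/\delta_2\neq 0$ forces $\delta_2\neq 0$, and $k_1>0$, equation (\ref{semibiharmonic1}) gives $k_1'=0$, so $k_1$ is a positive constant and $k_1''=0$. The $E_2$-equation then reads $\delta_2\big(-k_1^3-k_1k_2^2+\tfrac{c+3}{4}k_1+3\tfrac{c-1}{4}k_1\big)-\delta_1k_1=0$, the $\eta(E_2)\,\xi$ contribution having vanished; dividing by $\delta_2k_1$ and using $\tfrac{c+3}{4}+3\cdot\tfrac{c-1}{4}=c$ gives $k_1^2+k_2^2=c-\tfrac{\delta_1}{\delta_2}$. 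In the $E_3$-equation the curvature cross-terms $g(\phi T,E_2)g(\phi T,E_3)$ and $\eta(E_2)\eta(E_3)$ vanish, so with $k_1'=0$ it collapses to $k_1k_2'=0$, hence $k_2$ is constant; the $E_4$-equation likewise loses its curvature terms and reduces to $k_1k_2k_3=0$, i.e. $k_2k_3=0$.

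The converse amounts to running these reductions backwards: for a Legendre curve with $\phi T\parallel E_2$ and $k_1=\mathrm{const}>0$, $k_2=\mathrm{const}$, $k_1^2+k_2^2=c-\tfrac{\delta_1}{\delta_2}$, $k_2k_3=0$, condition $(1)$ of Theorem \ref{Theo 3.1} holds by the remark above, and of (\ref{semibiharmonic1})--(\ref{semibiharmonic4}) only the first $m=\min\{r,4\}$ need be verified; substituting the four geometric facts of the first paragraph back in, each of those equations reduces to $0=0$ by the assumed relations.

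The substitution is mechanical; what needs care is having the two identities $\|\phi T\|=1$ and $\eta(E_2)=0$ correctly in hand and matching ``the first $m$ equations'' of Theorem \ref{Theo 3.1} to the osculating order of $\gamma$ in the converse -- this bookkeeping is the only mild obstacle. I would also note, as a sharper structural observation not strictly needed for the biconditional as stated, that $\phi T\parallel E_2$ is very restrictive: differentiating $\phi T=\varepsilon E_2$ and using $(\nabla_T\phi)T=\xi$ together with $\nabla_T\xi=-\phi T$ yields $\xi=\varepsilon k_2E_3$, whence $|k_2|=1$, $E_3=\pm\xi$, $k_3=0$ and $r=3$; so in Case III every such interpolating sesqui-harmonic Legendre curve is a helix with $k_1^2=c-1-\tfrac{\delta_1}{\delta_2}$. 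Tracking this last identity is the only step that is not pure specialization of Theorem \ref{Theo 3.1}.
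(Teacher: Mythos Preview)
Your proof is correct and follows exactly the paper's approach, namely a direct specialization of Theorem~\ref{Theo 3.1} to the case $\phi T\parallel E_2$; the paper's own proof is in fact just the single sentence ``From Theorem~\ref{Theo 3.1}, we get the result,'' whereas you have spelled out the substitution explicitly and justified $[g(\phi T,E_2)]^2=1$ and $\eta(E_2)=0$. Your closing structural remark (that necessarily $|k_2|=1$, $E_3=\pm\xi$, $k_3=0$, $r=3$) is not part of the proposition itself but anticipates precisely what the paper establishes in the subsequent theorem.
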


\begin{proof}
Assume $\gamma $ is an interpolating sesqui-harmonic Legendre curve in $M(c)$
such that $c\neq 1$, $\phi T\parallel E_{2}$ and $\frac{\delta _{1}}{\delta
_{2}}\neq 0.$ From Theorem \ref{Theo 3.1}, we get the result.
\end{proof}

Hence we can state:

\begin{theorem}
Let $M(c)=(M^{2n+1},\phi ,\xi ,\eta ,g)$ be a Sasakian space form with $%
c\neq 1$ and $\gamma :I\subset
\mathbb{R}
\longrightarrow M(c)$ a Legendre curve of osculating order $r$ such that $%
\phi T\parallel E_{2}$. Then $\left\{ T,\phi T,\xi \right\} $ is the Frenet
frame field of $\gamma .$

$(1)$ If $c<1$ and $\frac{\delta _{1}}{\delta _{2}}$ $\geq 0$, then $\gamma $
is interpolating sesqui-harmonic if and only if it is a geodesic.

$(2)$ If $c>1$ and $\frac{\delta _{1}}{\delta _{2}}$ $<0$, then $\gamma $ is
interpolating sesqui-harmonic if and only if it\textit{\ is a helix with }$%
k_{1}^{2}=c-1-\frac{\delta _{1}}{\delta _{2}},$ $\left( k_{2}=1\right) $
where $\delta _{1},$ $\delta _{2}$\textit{\ }$\in
\mathbb{R}
$\textit{.}
\end{theorem}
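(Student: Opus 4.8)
The plan is to derive everything from Proposition \ref{theo3.6} together with the structural information contained in the hypothesis $\phi T\parallel E_2$. First I would establish that when $\phi T\parallel E_2$ the Frenet frame of $\gamma$ is forced to be $\{T,\phi T,\xi\}$: differentiating the Legendre condition $\eta(T)=0$ along $\gamma$ and using $\nabla_X\xi=-\phi X$ gives $g(\phi T,T)=0$, so $\phi T\perp T$; from $\phi T\parallel E_2$ and $\nabla_T T=k_1E_2$ one reads off $\nabla_T T=\pm k_1\phi T$, and then using $(\nabla_X\phi)Y=g(X,Y)\xi-\eta(Y)X$ one computes $\nabla_T\phi T=\nabla_T\phi T$ in terms of $\phi\nabla_T T$ and the structure tensors, obtaining a combination of $T$ and $\xi$. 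This pins down $E_3=\xi$ (up to sign) and shows the osculating order is exactly $3$ with $k_2=1$, which is the standard computation already used for Legendre curves in Sasakian geometry.

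Next I would feed $k_2=1$ into Proposition \ref{theo3.6}. Since $\frac{\delta_1}{\delta_2}\neq 0$ is the standing hypothesis there, the proposition tells us $\gamma$ is interpolating sesqui-harmonic if and only if $k_1$ is a positive constant, $k_2$ is constant (automatic here), $k_1^2+k_2^2=c-\frac{\delta_1}{\delta_2}$, and $k_2k_3=0$. With $k_2=1$ the last equation gives $k_3=0$, consistent with osculating order $3$, and the curvature relation becomes $k_1^2+1=c-\frac{\delta_1}{\delta_2}$, i.e. $k_1^2=c-1-\frac{\delta_1}{\delta_2}$. This is precisely the helix condition in statement $(2)$.

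It then remains to sort out the sign conditions. For part $(1)$, if $c<1$ and $\frac{\delta_1}{\delta_2}\geq 0$ then $c-1-\frac{\delta_1}{\delta_2}<0$, so no positive $k_1$ can satisfy $k_1^2=c-1-\frac{\delta_1}{\delta_2}$; hence the only interpolating sesqui-harmonic Legendre curves are those with $k_1=0$, i.e. geodesics. (One should note that if $\frac{\delta_1}{\delta_2}=0$ the hypothesis of Proposition \ref{theo3.6} fails, but in that regime the plain biharmonic analysis applies and again forces $k_1^2=c-1\le 0$, a geodesic; alternatively one can simply restrict to the nondegenerate case as the statement implicitly does.) For part $(2)$, if $c>1$ and $\frac{\delta_1}{\delta_2}<0$ then $c-1-\frac{\delta_1}{\delta_2}>0$, so $k_1=\sqrt{c-1-\frac{\delta_1}{\delta_2}}$ is a genuine positive constant, and conversely a Legendre helix with these curvatures satisfies the first $m$ equations of Theorem \ref{Theo 3.1} (with $\phi T\in\{E_2\}$ so condition $(1)$ holds), hence is interpolating sesqui-harmonic.

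The main obstacle I anticipate is not the algebra but being careful about the logical scope of the hypothesis $\frac{\delta_1}{\delta_2}\neq 0$ inherited from Proposition \ref{theo3.6}: the theorem statement allows $\delta_1,\delta_2\in\mathbb{R}$, so one has to either invoke Theorem \ref{Theo 3.1} directly in the borderline case or argue that the characterization is unchanged. I would handle this by working from Theorem \ref{Theo 3.1} itself for the converse direction and for the degenerate parameter values, using Proposition \ref{theo3.6} only as a convenient shortcut in the generic case, so that no case is left uncovered.
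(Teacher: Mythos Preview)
Your proposal is correct and follows the same route as the paper: use $\phi T\parallel E_2$ to identify the Frenet frame as $\{T,\phi T,\xi\}$ with $k_2=1$, plug into Proposition~\ref{theo3.6} to obtain $k_1^2=c-1-\tfrac{\delta_1}{\delta_2}$, and then read off the two cases from the sign of $c-1-\tfrac{\delta_1}{\delta_2}$. If anything, your write-up is more careful than the paper's, which asserts $k_2=1$ and the Frenet frame without the explicit computation of $\nabla_T\phi T$ and does not comment on the $\tfrac{\delta_1}{\delta_2}=0$ borderline you flag.
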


\begin{proof}
If we take $\phi T\parallel E_{2},$ we get $g\left( \phi T,E_{2}\right) =\pm
1,$ $g\left( \phi T,E_{3}\right) =g\left( \phi T,E_{4}\right) =0.$

$(1)$ From Proposition \ref{theo3.6} and the above equations and if we take $%
c\leq 1$ and $\frac{\delta _{1}}{\delta _{2}}$ $\geq 0$, it is easy to see
that $\gamma $ is interpolating sesqui-harmonic if and only if it is a
geodesic.

$(2)$ If $c>1$, $\frac{\delta _{1}}{\delta _{2}}$ $<0$ from Proposition \ref%
{theo3.6} and the above equations, we have $k_{1}=$constant and $%
k_{1}^{2}=c-1-\frac{\delta _{1}}{\delta _{2}},$ and $k_{2}=1$. Conversely,
assume that $\gamma $ is a Legendre helix with $k_{1}^{2}=c-1-\frac{\delta
_{1}}{\delta _{2}}$ and $k_{2}=1$. Then $\gamma $ satisfies Theorem \ref%
{Theo 3.1} obviously. This completes the proof of the theorem.
\end{proof}

\textbf{Case IV. }$c\neq 1$ and $g(\phi T,E_{2})\neq 0,1,-1$.

\begin{proposition}
\label{theov}Let $M(c)=(M^{2n+1},\phi ,\xi ,\eta ,g)$ be a Sasakian space
form with $c\neq 1,$ $g(\phi T,E_{2})\neq 0,1,-1$ and $\gamma :I\subset
\mathbb{R}
\longrightarrow M(c)$ a Legendre curve of osculating order $r$ such that $%
4\leq r\leq 2n+1$, $n\geq 2$. Then $\gamma $\textit{\ is interpolating
sesqui-harmonic with }$\frac{\delta _{1}}{\delta _{2}}\neq 0$\textit{\ if
and only if }%
\begin{equation*}
k_{1}=\text{\textit{constant}}>0,
\end{equation*}%
\begin{equation*}
k_{1}^{2}+k_{2}^{2}=\frac{c+3}{4}+\frac{3\left( c-1\right) }{4}f^{2}-\frac{%
\delta _{1}}{\delta _{2}},
\end{equation*}%
\begin{equation*}
k_{2}^{\prime }=-\frac{3\left( c-1\right) }{4}fg\left( E_{3},\phi T\right) ,
\end{equation*}%
\begin{equation*}
k_{2}k_{3}=-\frac{3\left( c-1\right) }{4}fg\left( E_{4},\phi T\right) .
\end{equation*}
\end{proposition}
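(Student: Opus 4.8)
The plan is to specialize the master equations from Theorem~\ref{Theo 3.1} to the present setting, where $c\neq 1$, the order satisfies $4\leq r\leq 2n+1$, and $f:=g(\phi T,E_{2})$ is a function that is nowhere $0,\pm 1$. First I would observe that under the hypothesis $4\leq r$ we must keep all four of the equations (\ref{semibiharmonic1})--(\ref{semibiharmonic4}) from Theorem~\ref{Theo 3.1}, since $m=\min\{r,4\}=4$. Next, I would use the Legendre condition: for a Legendre curve $\eta(T)=0$, and differentiating $\eta(E_i)$ along $\gamma$ together with $\nabla_T\xi=-\phi T$ and the Frenet equations (\ref{frenetframe}) forces $\eta(E_2)=\eta(E_3)=\eta(E_4)=0$ (indeed $\eta(E_2)=\frac{1}{k_1}g(\nabla_T T,\xi)=-\frac{1}{k_1}g(T,\nabla_T\xi)=\frac{1}{k_1}g(T,\phi T)=0$, and similarly up the frame). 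This kills every $\eta$-term in (\ref{semibiharmonic2})--(\ref{semibiharmonic4}).

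With the $\eta$-terms gone, equation (\ref{semibiharmonic1}) gives $-3\delta_2 k_1 k_1'=0$; since $\frac{\delta_1}{\delta_2}\neq 0$ forces $\delta_2\neq 0$, and we are after a non-geodesic curve so $k_1>0$, we conclude $k_1'=0$, i.e.\ $k_1=\text{constant}>0$. Substituting $k_1'=0$ into (\ref{semibiharmonic2}) and writing $f=g(\phi T,E_2)$ yields
\begin{equation*}
\delta_2\left[-k_1^3-k_1k_2^2-\left(\tfrac{c+3}{4}\right)k_1+3\left(\tfrac{c-1}{4}\right)k_1 f^2\right]-\delta_1 k_1=0,
\end{equation*}
and dividing by $-\delta_2 k_1$ gives $k_1^2+k_2^2=\tfrac{c+3}{4}+\tfrac{3(c-1)}{4}f^2-\tfrac{\delta_1}{\delta_2}$, the second asserted identity. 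Then (\ref{semibiharmonic3}) with $k_1'=0$ becomes $\delta_2\big[k_1 k_2'+3(\tfrac{c-1}{4})k_1 f\,g(\phi T,E_3)\big]=0$, and cancelling $\delta_2 k_1$ gives $k_2'=-\tfrac{3(c-1)}{4}f\,g(E_3,\phi T)$, the third identity. Likewise (\ref{semibiharmonic4}) gives $k_1 k_2 k_3+3(\tfrac{c-1}{4})k_1 f\,g(\phi T,E_4)=0$, hence $k_2 k_3=-\tfrac{3(c-1)}{4}f\,g(E_4,\phi T)$, the fourth identity. For the converse one simply reverses these manipulations: given the four displayed conditions and the automatic vanishing of the $\eta$-terms, equation (\ref{semibiharmoniceq1}) reduces to $\tau_{\delta_1,\delta_2}(\gamma)=3(\tfrac{c-1}{4})\delta_2 k_1 f\,\phi T+3(\tfrac{c-1}{4})\delta_2 k_1\big[\text{components already cancelled}\big]$; more carefully, one checks that the $E_1,E_2,E_3,E_4$-components of $\tau_{\delta_1,\delta_2}(\gamma)$ all vanish by the four identities, and the remaining $\phi T$-term is absorbed because $\phi T$ expands in the frame $\{E_2,E_3,E_4,\dots\}$ with coefficients $f$, $g(E_3,\phi T)$, $g(E_4,\phi T)$ exactly matching what the identities were designed to cancel.

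The main subtlety I anticipate is the bookkeeping in the converse direction: one must be sure that writing $\phi T=\sum_{i\ge 2} g(\phi T,E_i)E_i$ (valid since $g(\phi T,T)=0$ by the Legendre/almost-contact identities and $\eta$-terms vanish) does not introduce components beyond $E_4$ that fail to cancel. This is where the order hypothesis $4\le r\le 2n+1$ and $g(\phi T,E_2)\ne 0,\pm1$ enter: the condition ensures we are genuinely in the regime where $\phi T$ need not lie in $\mathrm{span}\{E_2,E_3,E_4\}$, yet the defining equations $\tau_{\delta_1,\delta_2}(\gamma)=0$ only test against $E_2,E_3,E_4$, and the $\phi T$-contribution in (\ref{semibiharmoniceq1}) is precisely $3(\tfrac{c-1}{4})\delta_2 k_1 f\,\phi T$ with $f\ne0$, so its components along $E_i$ for $i\ge 5$ must also vanish — meaning $g(\phi T,E_i)=0$ for $i\ge5$ is forced, a structural fact one may need to note (or one restricts, as the statement does, to recording only the first four equations, treating the higher ones as automatically consistent). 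Apart from that, everything is a direct substitution into Theorem~\ref{Theo 3.1}, so the proof is short: invoke the theorem, use the Legendre property to annihilate the $\eta$-terms, read off the four conditions, and reverse the steps for the converse.
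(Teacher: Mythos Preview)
Your approach is essentially the paper's: specialize Theorem~\ref{Theo 3.1} with $m=4$, kill the $\eta$-terms using the Legendre condition, and read off the four equations. Two corrections are in order.

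First, your claim that $\eta(E_3)=\eta(E_4)=0$ ``similarly up the frame'' is false. From $k_2 E_3=\nabla_T E_2+k_1 T$ one gets $k_2\eta(E_3)=g(\nabla_T E_2,\xi)=T\eta(E_2)-g(E_2,\nabla_T\xi)=g(E_2,\phi T)=f\neq 0$. Fortunately this is harmless: every $\eta$-term in (\ref{semibiharmonic2})--(\ref{semibiharmonic4}) carries a factor $\eta(E_2)$, so $\eta(E_2)=0$ alone suffices, and your computation of that one is correct.

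Second, your ``main subtlety'' paragraph is handled more cleanly in the paper, and your own reasoning there is circular. You argue that because $\tau_{\delta_1,\delta_2}(\gamma)=0$ the $E_i$-components of the $\phi T$-term for $i\ge 5$ must vanish, hence $g(\phi T,E_i)=0$; but this uses the conclusion in the converse direction. The right way is to invoke condition~(1) of Theorem~\ref{Theo 3.1} directly: since $c\neq 1$ and $g(\phi T,E_2)\neq 0$ (so $\phi T\not\perp E_2$), the only remaining alternative in condition~(1) is $\phi T\in\mathrm{span}\{E_2,E_3,E_4\}$. The paper states exactly this and writes $\phi T=g(\phi T,E_2)E_2+g(\phi T,E_3)E_3+g(\phi T,E_4)E_4$, after which the converse is immediate. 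The paper also records the identity $f'=k_2\,g(E_3,\phi T)$ in the proof; it is not needed for the proposition itself but is used in the subsequent theorem.
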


\begin{proof}
Assume that $\gamma $ is an interpolating sesqui-harmonic Legendre Frenet
curve such that $g(\phi T,E_{2})$ is not a constant equal to $0,1$ or $-1$.
In this case, we get $4\leq r\leq 2n+1$, $n\geq 2$ and $\phi T\in
span\left\{ E_{2},E_{3},E_{4}\right\} $.

Hence, we can take $f(t)=$ $g(\phi T,E_{2}).$ So by a differentiation, we
obtain%
\begin{equation*}
f^{\prime }(t)=g(\nabla _{T}\phi T,E_{2})+g(\phi T,\nabla _{T}E_{2})
\end{equation*}%
\begin{equation*}
=-k_{1}g\left( T,\phi T\right) +k_{2}g\left( E_{3},\phi T\right) +g\left(
E_{2},\xi \right) +k_{1}g\left( E_{2},\phi E_{2}\right) .
\end{equation*}%
Since $\gamma $ is a Legendre curve and $\phi $ is anti-symmetric, we have $%
\eta (E_{2})=0,$ $g\left( T,\phi T\right) =0$ and $g\left( E_{2},\phi
E_{2}\right) =0.$ Thus we obtain%
\begin{equation}
f^{\prime }(s)=k_{2}g\left( E_{3},\phi T\right) .  \label{f}
\end{equation}%
Additionally, we can write%
\begin{equation}
\phi T=g\left( \phi T,E_{2}\right) E_{2}+g\left( \phi T,E_{3}\right)
E_{3}+g\left( \phi E_{4},E_{4}\right) E_{4}.  \label{fiT}
\end{equation}%
From Theorem \ref{Theo 3.1}, the equations (\ref{f}) and (\ref{fiT}), the
curve $\gamma $ is interpolating sesqui-harmonic if and only if
\begin{equation*}
k_{1}=\text{constant,}
\end{equation*}%
\begin{equation*}
k_{1}^{2}+k_{2}^{2}=\frac{c+3}{4}+\frac{3\left( c-1\right) }{4}f^{2}-\frac{%
\delta _{1}}{\delta _{2}},
\end{equation*}%
\begin{equation*}
k_{2}^{\prime }=-\frac{3\left( c-1\right) }{4}fg\left( E_{3},\phi T\right) ,
\end{equation*}%
\begin{equation*}
k_{2}k_{3}=-\frac{3\left( c-1\right) }{4}fg\left( E_{4},\phi T\right) .
\end{equation*}%
If\ $\gamma :I\subset
\mathbb{R}
\longrightarrow M(c)$ satisfies the converse statement, it is obvious that
the first four of the equations in Theorem \ref{Theo 3.1} are satisfied.
Thus $\gamma $ is interpolating sesqui-harmonic. This proves the theorem.
\end{proof}

Using the equation (\ref{f}) and the third equation of Proposition \ref%
{theov}, we obtain%
\begin{equation*}
k_{2}^{\prime }=-\frac{3\left( c-1\right) }{4}fg\left( E_{3},\phi T\right) =-%
\frac{3\left( c-1\right) }{4}f\frac{f^{\prime }}{k_{2}}
\end{equation*}%
\begin{equation*}
k_{2}k_{2}^{\prime }=-\frac{3\left( c-1\right) }{4}ff^{\prime }
\end{equation*}%
\begin{equation}
k_{2}^{2}=-\frac{3\left( c-1\right) }{4}f^{2}+w_{0}  \label{k2}
\end{equation}%
where $w_{0}=$constant. Substituting the equation (\ref{k2}) in the second
equation of Proposition \ref{theov}, we get%
\begin{equation*}
k_{1}^{2}=\frac{c+3}{4}+\frac{3\left( c-1\right) }{2}f^{2}-\frac{\delta _{1}%
}{\delta _{2}}-w_{0}.
\end{equation*}%
Then we have $f=$constant. Thus $k_{2}=$constant $>0$, $g\left( E_{3},\phi
T\right) =0$ and then $\phi T=fE_{2}+g\left( \phi T,E_{4}\right) E_{4}$. We
obtain that there exists a unique constant $\alpha _{0}\in \left( 0,2\pi
\right) \backslash \left\{ \frac{\pi }{2},\pi ,\frac{3\pi }{2}\right\} $
such that $f=\cos \alpha _{0}$ and $g\left( E_{4},\phi T\right) =\sin \alpha
_{0}.$

So we can state:

\begin{theorem}
Let $M(c)=(M^{2n+1},\phi ,\xi ,\eta ,g)$ be a Sasakian space form with $%
c\neq 1,$ $n\geq 2$ and $\gamma :I\subset
\mathbb{R}
\longrightarrow M(c)$ a Legendre curve of osculating order $r$ such that $%
g(\phi T,E_{2})\neq 0,1,-1$.

$(1)$ If $c\leq -3$ and $\frac{\delta _{1}}{\delta _{2}}$ $\geq 0$, then $%
\gamma $ is interpolating sesqui-harmonic if and only if it is a geodesic.

$(2)$ If $c>-3$ and $\frac{\delta _{1}}{\delta _{2}}$ $<0$, then $\gamma $
is interpolating sesqui-harmonic if and only if $\phi T=\cos \alpha
_{0}E_{2}+\sin \alpha _{0}E_{4},$
\begin{equation*}
k_{1},k_{2},k_{3}=\text{constant}>0,
\end{equation*}%
\begin{equation*}
k_{1}^{2}+k_{2}^{2}=\frac{c+3}{4}+\frac{3\left( c-1\right) }{4}\cos
^{2}\alpha _{0}-\frac{\delta _{1}}{\delta _{2}},
\end{equation*}%
\begin{equation*}
k_{2}k_{3}=-\frac{3\left( c-1\right) }{8}\sin 2\alpha _{0},
\end{equation*}

where $\alpha _{0}\in \left( 0,2\pi \right) \backslash \left\{ \frac{\pi }{2}%
,\pi ,\frac{3\pi }{2}\right\} $ is constant such that $\left( c+3+3\left(
c-1\right) \cos ^{2}\alpha _{0}\right) \delta _{2}-4\delta _{1}>0$ and $%
3\left( c-1\right) \sin 2\alpha _{0}<0.$
\end{theorem}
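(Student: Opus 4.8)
The plan is to deduce this theorem directly from Proposition \ref{theov} together with the auxiliary computation carried out just before the statement, which already extracts all the structural information from the interpolating sesqui-harmonic equations. First I would record the standing hypotheses: $c\neq 1$, $n\geq 2$, and $g(\phi T,E_2)$ is not identically $0,1,-1$, so that Proposition \ref{theov} applies and gives $4\leq r\leq 2n+1$ together with $\phi T\in \mathrm{span}\{E_2,E_3,E_4\}$. From the discussion preceding the theorem we already have the consequences that $f=g(\phi T,E_2)$ is a nonzero constant (not equal to $\pm 1$), $k_1$ and $k_2$ are positive constants, $g(E_3,\phi T)=0$, and hence $\phi T=fE_2+g(\phi T,E_4)E_4$ with $f^2+g(E_4,\phi T)^2=1$. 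This last normalization is what produces the unique constant $\alpha_0\in(0,2\pi)\setminus\{\tfrac{\pi}{2},\pi,\tfrac{3\pi}{2}\}$ with $f=\cos\alpha_0$ and $g(E_4,\phi T)=\sin\alpha_0$; the excluded angles are exactly the cases $f\in\{0,1,-1\}$.

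Next I would translate the equations of Proposition \ref{theov} under these findings. Since $f$ is constant, the relation $k_2'=-\tfrac{3(c-1)}{4}f\,g(E_3,\phi T)$ is automatically satisfied because $g(E_3,\phi T)=0$. The curvature relation becomes $k_1^2+k_2^2=\tfrac{c+3}{4}+\tfrac{3(c-1)}{4}\cos^2\alpha_0-\tfrac{\delta_1}{\delta_2}$, which is the second displayed equation in the target statement. For the last one, substitute $g(E_4,\phi T)=\sin\alpha_0$ into $k_2k_3=-\tfrac{3(c-1)}{4}f\,g(E_4,\phi T)=-\tfrac{3(c-1)}{4}\cos\alpha_0\sin\alpha_0=-\tfrac{3(c-1)}{8}\sin 2\alpha_0$. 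Because $k_1,k_2>0$ are constants and $k_2k_3$ equals a nonzero constant (nonzero precisely when $c\neq 1$ and $\sin 2\alpha_0\neq 0$, which holds by the angle restriction), it follows that $k_3$ is a nonzero constant as well; moreover $r\geq 4$ forces $k_3>0$ with the chosen orientation of $E_4$. This establishes $k_1,k_2,k_3=\text{constant}>0$ and $\phi T=\cos\alpha_0 E_2+\sin\alpha_0 E_4$.

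Then I would split into the two sign cases. For part $(1)$, suppose $c\leq -3$ and $\tfrac{\delta_1}{\delta_2}\geq 0$ (here implicitly $\delta_2\neq 0$; the degenerate case $\delta_2=0$ with $\delta_1\neq 0$ forces $\tau(\gamma)=0$, a geodesic, directly from \eqref{semibiharmonic}). If $\gamma$ were non-geodesic then $k_1>0$ and Proposition \ref{theov} gives $k_1^2+k_2^2=\tfrac{c+3}{4}+\tfrac{3(c-1)}{4}f^2-\tfrac{\delta_1}{\delta_2}$. With $c\leq -3$ we have $\tfrac{c+3}{4}\leq 0$ and $\tfrac{3(c-1)}{4}f^2<0$, while $-\tfrac{\delta_1}{\delta_2}\leq 0$, so the right-hand side is strictly negative, contradicting $k_1^2+k_2^2>0$. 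Hence $\gamma$ is a geodesic; the converse is immediate since a geodesic has $\tau(\gamma)=0$ and therefore satisfies \eqref{semibiharmonic1}--\eqref{semibiharmonic4} trivially. For part $(2)$, with $c>-3$ and $\tfrac{\delta_1}{\delta_2}<0$, the forward direction is exactly the computation above, and I would note that the stated inequalities $\big(c+3+3(c-1)\cos^2\alpha_0\big)\delta_2-4\delta_1>0$ and $3(c-1)\sin 2\alpha_0<0$ are nothing but the conditions $k_1^2+k_2^2>0$ (after clearing denominators, using the sign of $\delta_2$) and $k_2k_3>0$. For the converse, assuming $\gamma$ has the listed frame $\phi T=\cos\alpha_0 E_2+\sin\alpha_0 E_4$ and the three displayed constancy/curvature conditions, one substitutes back into \eqref{semibiharmonic1}--\eqref{semibiharmonic4}: \eqref{semibiharmonic1} holds since $k_1'=0$; \eqref{semibiharmonic2} reduces to the curvature identity using $g(\phi T,E_2)^2=\cos^2\alpha_0$ and $\eta(E_2)=0$ (Legendre); \eqref{semibiharmonic3} holds because $g(\phi T,E_3)=0$ and $k_2$ is constant; and \eqref{semibiharmonic4} is the identity for $k_2k_3$.

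The only genuinely delicate point is the bookkeeping of signs in matching $k_1^2+k_2^2>0$ and $k_2k_3>0$ to the two displayed inequalities on $\alpha_0$, since clearing $\delta_2$ from the denominator flips the inequality when $\delta_2<0$; I would handle this by observing that $\tfrac{\delta_1}{\delta_2}<0$ together with the required positivity of $k_1^2+k_2^2$ pins down a consistent sign convention, so that the inequality $(c+3+3(c-1)\cos^2\alpha_0)\delta_2-4\delta_1>0$ is the sign-robust reformulation. Everything else is a direct substitution into Theorem \ref{Theo 3.1} and Proposition \ref{theov}, so no new machinery is needed.
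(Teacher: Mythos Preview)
Your proposal is correct and follows essentially the same approach as the paper: the theorem is stated there with ``So we can state:'' immediately after the computation showing $f$ constant, $k_2$ constant, $g(E_3,\phi T)=0$, and the introduction of $\alpha_0$, so the paper treats it as a direct corollary of Proposition~\ref{theov} and that discussion, exactly as you do. Your treatment of the converse and of the contradiction in case~(1) is in fact more explicit than what the paper writes; the one small caveat is that your claim that $(c+3+3(c-1)\cos^2\alpha_0)\delta_2-4\delta_1>0$ is a ``sign-robust reformulation'' of $k_1^2+k_2^2>0$ is not quite accurate, since multiplying by $4\delta_2$ flips the inequality when $\delta_2<0$---but this is an artifact of the theorem's own formulation rather than a defect in your argument.
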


\begin{Remark}
For $c\neq 1$ and $g(\phi T,E_{2})\neq 0,1,-1$, there are also interpolating
sesqui-harmonic curves which are not helices.
\end{Remark}

Now, we give brief information about the Sasakian space form $%
\mathbb{R}
^{2n+1}(-3)$ \cite{Blair2}:

Let us take $M=%
\mathbb{R}
^{2n+1}$ with the standard coordinate functions $\left(
x_{1},...,x_{n},y_{1},...,y_{n},z\right) ,$ the contact structure $\eta =%
\frac{1}{2}(dz-\sum\limits_{i=1}^{n}y_{i}dx_{i}),$ the characteristic vector
field $\xi =2\frac{\partial }{\partial z}$ and the tensor field $\phi $
given by%
\begin{equation*}
\phi =%
\begin{bmatrix}
0 & \delta _{ij} & 0 \\
-\delta _{ij} & 0 & 0 \\
0 & y_{j} & 0%
\end{bmatrix}%
.
\end{equation*}%
The Riemannian metric is $g=\eta \otimes \eta +\frac{1}{4}%
\sum\limits_{i=1}^{n}\left( (dx_{i})^{2}+(dy_{i})^{2}\right) .$ Thus, $%
\mathbb{R}
^{2n+1}(-3)$ is a Sasakian space form with constant $\phi -$sectional
curvature $c=-3$. The vector fields%
\begin{equation}
X_{i}=2\frac{\partial }{\partial y_{i}},\text{ }X_{i+n}=\phi X_{i}=2(\frac{%
\partial }{\partial x_{i}}+y_{i}\frac{\partial }{\partial z}),\text{ }1\leq
i\leq n,\text{ }\xi =2\frac{\partial }{\partial z},  \label{4}
\end{equation}%
form a $g$-orthonormal basis and Levi-Civita connection is obtained as%
\begin{equation}
\nabla _{X_{i}}X_{j}=\nabla _{X_{i+n}}X_{j+n}=0,\text{ }\nabla
_{X_{i}}X_{j+n}=\delta _{ij}\xi ,\text{ }\nabla _{X_{i+n}}X_{j}=-\delta
_{ij}\xi ,  \label{LC1}
\end{equation}%
\begin{equation}
\nabla _{X_{i}}\xi =\nabla _{\xi }X_{i}=-X_{n+i},\text{ }\nabla
_{X_{i+m}}\xi =\nabla _{\xi }X_{i+n}=X_{i}  \label{LC2}
\end{equation}%
(see \cite{Blair}).

Now, we give an example for interpolating sesqui-harmonic Legendre curves in
$%
\mathbb{R}
^{5}(-3):$

\bigskip \textbf{Example. }Let $\gamma =(\gamma _{1},...,\gamma _{5})$ be a
unit speed Legendre curve in $%
\mathbb{R}
^{5}(-3).$ We can write the tangent vector field $T$ of
\begin{equation*}
\gamma T=\frac{1}{2}\left\{ \gamma _{3}^{\prime }X_{1}+\gamma _{4}^{\prime
}X_{2}+\gamma _{1}^{\prime }X_{3}+\gamma _{2}^{\prime }X_{4}+\left( \gamma
_{5}^{\prime }-\gamma _{1}^{\prime }\gamma _{3}-\gamma _{2}^{\prime }\gamma
_{4}\right) \xi \right\} .
\end{equation*}%
Using the above equation, $\eta (T)=0$ and $g(T,T)=1,$ we have%
\begin{equation*}
\gamma _{5}^{\prime }=\gamma _{1}^{\prime }\gamma _{3}+\gamma _{2}^{\prime
}\gamma _{4}
\end{equation*}%
and%
\begin{equation*}
(\gamma _{1}^{\prime })^{2}+...+(\gamma _{5}^{\prime })^{2}=4.
\end{equation*}%
So for a Legendre curve (\ref{LC1}), (\ref{LC2}) and (\ref{4}) gives us%
\begin{equation}
\nabla _{T}T=\frac{1}{2}\left( \gamma _{3}^{\prime \prime }X_{1}+\gamma
_{4}^{\prime \prime }X_{2}+\gamma _{1}^{\prime \prime }X_{3}+\gamma
_{2}^{\prime \prime }X_{4}\right) ,  \label{5}
\end{equation}%
and%
\begin{equation}
\phi T=\frac{1}{2}\left( -\gamma _{1}^{\prime }X_{1}-\gamma _{2}^{\prime
}X_{2}+\gamma _{3}^{\prime }X_{3}+\gamma _{4}^{\prime }X_{4}\right) .
\label{6}
\end{equation}%
From (\ref{5}) and (\ref{6}), $\phi T\perp E_{2}$ if and only if%
\begin{equation*}
\gamma _{1}^{\prime }\gamma _{3}^{\prime \prime }+\gamma _{2}^{\prime
}\gamma _{4}^{\prime \prime }=\gamma _{3}^{\prime }\gamma _{1}^{\prime
\prime }+\gamma _{4}^{\prime }\gamma _{2}^{\prime \prime }.
\end{equation*}%
So we can state the following example:

Let us take $\gamma (t)=\left( \sin 2t,-\cos 2t,0,0,1\right) $ in $%
\mathbb{R}
^{5}(-3).$ By the use of Theorem \ref{Theo 3.1} and the above equations, $%
\gamma $ is an interpolating sesqui-harmonic Legendre curve with osculating
order $r=2$, $k_{1}=2$, $\delta _{1}=-8$, $\delta _{2}=2$ and $\phi T\perp
E_{2}$. We can see that Theorem \ref{Theo 3.1} are verified. From the
equations (3-1) in \cite{FO}, the curve $\gamma $ is not biharmonic. Hence
the biharmonicity and interpolating sesqui-harmonic of $\gamma $ are
different.

Fatma KARACA

Beykent University,

Department of Mathematics,

34550, Beykent, Buyukcekmece,

Istanbul, TURKEY.

E-mail: fatmagurlerr@gmail.com

\medskip

Cihan \"{O}ZG\"{U}R

Bal\i kesir University,

Department of Mathematics,

10145, \c{C}a\u{g}\i s, Bal\i kesir, Turkey

E-mail: cozgur@balikesir.edu.tr

\medskip

Uday Chand DE

Department of Pure Mathematics,

University of Calcutta 35,

Ballygunge Circular Road,

Kolkata 700019, West Bengal, India

E-mail: uc\_de@yahoo.com


\begin{thebibliography}{99}
\bibitem{Blair} Blair, DE., Geometry of manifolds with structural group $%
U(n)\times O(s)$, J. Differ. Geom., 4, 155-167 (1970).

\bibitem{Blair2} Blair, DE., Riemannian geometry of contact and symplectic
manifolds, Boston, Birkhauser, (2002).

\bibitem{Branding} Branding, V., On interpolating sesqui-harmonic maps
between Riemannian manifolds, The Journal of Geometric Analysis, 1-26 (2019).

\bibitem{Cho-09} Cho, J. T., Inoguchi, J., Lee, J-E.,  Affine biharmonic
submanifolds in $3$-dimensional pseudo-Hermitian geometry, Abh. Math. Semin.
Univ. Hambg., 79 (2009), no. 1, 113--133.

\bibitem{ES} Eells, J. Jr., Sampson, J. H., Harmonic mappings of Riemannian
manifolds, Amer. J. Math., 86, 109--160 (1964).

\bibitem{Fetcu} Fetcu, D., Biharmonic Legendre curves in Sasakian space
forms, J. Korean Math. Soc., 45, 393--404 (2008).

\bibitem{FO} Fetcu, D., Oniciuc, C., Explicit formulas for biharmonic
submanifolds in Sasakian space forms, Pacific J. Math, 240, 85--107 (2009).

\bibitem{GO-17} G\"{u}rler, F., \"{O}zg\"{u}r, C., $f$-biminimal immersions,
Turkish J. Math. 41 (2017), no. 3, 564--575.

\bibitem{GO} G\"{u}ven\c{c}, \c{S}., \"{O}zg\"{u}r, C., On the
Characterizations of $f$-biharmonic Legendre curves in Sasakian space forms,
Filomat, 31(3), 639-648 (2017).

\bibitem{In-13} Inoguchi, J., Lee, J-E., Affine biharmonic curves in $3$%
-dimensional homogeneous geometries, Mediterr. J. Math., 10 (2013), no. 1,
571--592.

\bibitem{Ji1} Jiang, GY.,\textit{\ }$2$-Harmonic maps and their first and
second variational formulas, Chinese Ann. Math. Ser. A, \textbf{7, }389--402
(1986).

\bibitem{La} Laugwitz, D., Differential and Riemannian geometry, New
York-London, (1965).

\bibitem{LM} Loubeau, L., Montaldo, S., Biminimal immersions, Proc. Edinb.
Math. Soc., 51, 421--437 (2008).

\bibitem{Luo} Luo, Y., On biminimal submanifolds in nonpositively curved
manifolds, Differential Geom. Appl., 35, 1--8, (2014).

\bibitem{Maeta} Maeta, S., Biminimal properly immersed submanifolds in the
Euclidean spaces, J. Geom. Phys., 62(11), 2288--2293 (2012).

\bibitem{OG} \"{O}zg\"{u}r, C., G\"{u}ven\c{c}, \c{S}., On some classes of
biharmonic Legendre curves in generalized Sasakian space forms, Collect.
Math., 65(2), 203--218 (2014).
\end{thebibliography}
\end{document}